\newsavebox\EllipticCurve
\newcommand{\hphi}{\widehat{\phi}}
\newcommand\be{\begin{equation}}
\newcommand\ee{\end{equation}}
\newcommand\bea{\begin{eqnarray}}
\newcommand\eea{\end{eqnarray}}
\newcommand\bi{\begin{itemize}}
\newcommand\ei{\end{itemize}}
\newcommand\ben{\begin{enumerate}}
\newcommand\een{\end{enumerate}}
\newtheorem{thm}{Theorem}[section]
\newtheorem{defi}[thm]{Definition}
\newtheorem{rek}[thm]{Remark}
\newcommand{\twocase}[5]{#1 \begin{cases} #2 & \text{#3}\\ #4
&\text{#5} \end{cases}   }
\newcommand{\R}{\ensuremath{\mathbb{R}}}
\newcommand{\C}{\ensuremath{\mathbb{C}}}
\newcommand{\Z}{\ensuremath{\mathbb{Z}}}
\newcommand{\Q}{\mathbb{Q}}
\newcommand{\foh}{\frac{1}{2}}  
\newcommand{\mattwo}[4]
{\left(\begin{array}{cc}
                        #1  & #2   \\
                        #3 &  #4
                          \end{array}\right) }
\newcommand{\gep}{\epsilon}
\newcommand{\notdiv}{\nmid}               
\numberwithin{equation}{section}
\begin{document}

\title{Towards an `average' version of the Birch and Swinnerton-Dyer Conjecture}


\author{John Goes}\email{johnwgoes@gmail.com}
\address{Department of Mathematics, Statistics and Computer Science, University of Illinois at Chicago, Chicago, IL 60680}

\author{Steven J. Miller}\email{Steven.J.Miller@williams.edu}
\address{Department of Mathematics and Statistics, Williams College, Williamstown, MA 01267}

\subjclass[2010]{11M41 (primary), 11G40, 15B52 (secondary).}

\date{\today}

\thanks{This work was done at the 2009 SMALL Undergraduate Research Project at Williams College, funded by NSF Grant DMS0850577 and Williams College; it is a pleasure to thank them for their support. We would also like to thank Michael Greenblatt, Mark Watkins, and the organizers and participants of the 2009 YMC at Ohio State for helpful discussions on an earlier draft. The second named author was also partly supported by NSF Grant DMS0855257.}

\begin{abstract}
The Birch and Swinnerton-Dyer conjecture states that the rank of the Mordell-Weil group of an elliptic curve $E$ equals the order of vanishing at the central point of the associated L-function $L(s,E)$. Previous investigations have focused on bounding how far we must go above the central point to be assured of finding a zero, bounding the rank of a fixed curve or on bounding the average rank in a family. Mestre \cite{Mes} showed the first zero occurs by $O(1 / \log \log N_E)$, where $N_E$ is the conductor of $E$, though we expect the correct scale to study the zeros near the central point is the significantly smaller $1 / \log N_E$. We significantly improve on Mestre's result by averaging over a one-parameter family of elliptic curves, obtaining non-trivial upper and lower bounds for the average number of normalized zeros in intervals on the order of $1 / \log N_E$ (which is the expected scale). Our results may be interpreted as providing further evidence in support of the Birch and Swinnerton-Dyer conjecture, as well as the Katz-Sarnak density conjecture from random matrix theory (as the number of zeros predicted by random matrix theory lies between our upper and lower bounds). These methods may be applied to additional families of $L$-functions.
\end{abstract}

\maketitle


\section{Introduction}

The goal of this paper is to provide evidence towards the Birch and Swinnerton-Dyer conjecture in one-parameter families of elliptic curves. We briefly summarize our results, assuming the reader is familiar with the notation and subject. Afterwards we review the needed background material from elliptic curves and previous results in \S\ref{sec:ellcurvereview}; for the convenience of the reader, we state all the conjectures assumed or discussed at various points in Appendix \ref{sec:standardconj}. We then prove our theorems and discuss generalizations to other families of $L$-functions in \S\ref{sec:lowerboundnumzeros}, where we give explicit non-trivial upper and lower bounds.

The Birch and Swinnerton Dyer conjecture asserts that if $E$ is an elliptic curve whose Mordell-Weil group $E(\Q)$ has geometric rank $r$, then the associated completed $L$-function $\Lambda(s,E)$ has analytic rank $r$ (i.e., it vanishes to order $r$ at the central point). This is an exceptionally hard problem to investigate, theoretically and numerically. While there is some theoretical evidence when the rank is at most 1, the general case is intractable both theoretically and experimentally. For example, although we can construct elliptic curves with geometric rank exceeding 20, the largest known lower bound for the analytic rank of a $\Lambda(s,E)$ is only 3.\footnote{The number of terms needed for the computation is on the order of the square-root of the conductor of $E$, which grows rapidly in families. While it is possible to numerically show that the first $r$ Taylor coefficients of $\Lambda(s,E)$ are close to zero for many $E$'s with geometric rank $r$, in general these computations can only provide evidence. The exception is when we have formulas for the derivatives as a known quantity times a rational, in which case we can convert these calculations to proofs of vanishing. See \texttt{http://web.math.hr/$\sim$duje/tors/rk28.html} for an example by N. Elkies of an elliptic curve with geometric rank at least 28.}

We consider the following natural question. Let $E$ be an elliptic curve with geometric rank $r$, and assume the Generalized Riemann Hypothesis (GRH). The Birch and Swinnerton-Dyer conjecture predicts that there should be $r$ zeros at the central point. \emph{How far must we go along the critical line before we are assured of seeing $r$ zeros?}

If $N_E$ denotes the conductor of the elliptic curve, we expect the correct scale for zeros near the central point to be of size $1/\log N_E$. Miller \cite{Mil3} investigated the first few zeros above the central point for the family of all elliptic curves as well as one-parameter families of small rank over $\Q(T)$. His results are consistent with the low zeros being of height on the order of $1/\log N_E$; however, the first few zeros are higher than the $N_E \to \infty$ scaling limits predicted by the independent model of random matrix theory. The data suggests that, for finite conductors, better agreement is obtained by modeling these zeros with the interaction model (which involves Jacobi ensembles). Determining the correct corresponding random matrix ensemble involves understanding the discretization of the central values of $L$-functions and the lower order terms in the 1-level density. In his thesis Duc Khiem Huyn \cite{Huy} successfully modeled the first zero of the family of quadratic twists of a fixed elliptic curve, and current work by the second named author and Eduardo Due\~nez, Duc Khiem Huynh, Jon Keating and Nina Snaith is investigating the case of a general one-parameter family \cite{DHKMS}.

The best theoretical result on the first zero above the central point is due to Mestre. Assuming the Generalized Riemann Hypothesis, Mestre \cite{Mes} bounded the analytic rank of $E$ by $O(\log N_E/\log\log N_E)$ and showed its first zero above the central point is at most $B/\log\log N_E$. While this is significantly larger than what we expect the truth to be, namely $O(1/\log N_E)$, it has the advantage of holding for all elliptic curves.

In this note we show that we may reduce the window on the critical line to something of the expected order if we average over a one-parameter family of elliptic curves. Specifically, consider a one-parameter family $\mathcal{E}: y^2 = x^3 + A(T)x + B(T)$ of geometric rank $r$ over $\Q(T)$, with $A(T), B(T) \in \Z[T]$. For each $t\in\Z$ we may specialize and obtain an elliptic curve $E_t: y^2 = x^3 + A(t)x + B(t)$ with conductor $N_t := N_{E_t}$. By Silverman's specialization theorem \cite{Sil2}, for all $t$ sufficiently large each elliptic curve $E_t$ has geometric rank at least $r$.  Assuming standard conjectures, Helfgott \cite{He} proved that for a generic family the sign of the functional equation is 1 half the time and -1 the other half. It is believed that a generic curve in a generic family has analytic rank as small as possible consistent with all constraints. In our case, as the rank must be at least $r$ if the Birch and Swinnerton-Dyer conjecture is true, we expect that in the limit half the curves will have analytic rank $r$ and the other half $r+1$, for an average rank of $r+\frac12$.

We take our family to be $\mathcal{F}_R := \{\Lambda(s,E_t): R \le t \le 2R\}$ with $R\to\infty$, though we often abuse notation and use $\mathcal{F}_R$ to denote $t$ in $[R, 2R]$.  There are two ways to normalize the zeros of $\Lambda(s,E_t)$ near the central point: (1) globally, using $\frac{\log N}{2\pi} :=  \frac1{R}\sum_{t\in\mathcal{F}_R} \frac{\log N_t}{2\pi}$; (2) locally, using $\frac{\log N_t}{2\pi}$. It is significantly easier to use the global rescaling; however, as each elliptic curve can be considered independent of the family, it is more correct to use the local rescaling (\emph{in this case, due to the technicalities that arise we must add some additional restrictions on which $t \in [R,2R]$ are in the family}).

Before stating our main result, we must first introduce some notation. All conjectures are stated in full in Appendix \ref{sec:standardconj}.

\begin{defi}[Sieved family]\label{defi:sievedfamily} Let $\mathcal{E}: y^2 = x^3 + A(T)x+B(T)$ be a one-parameter family of elliptic curves over $\Q(T)$ with discriminant $\Delta(T)$, let $D(T)$ be the product of the
irreducible polynomial factors of the discriminant, and let $B$ be the largest square
dividing $D(t)$ for all integers $t$. For a fixed $c, t_0$, our family is the set of all $t = ct' + t_0$ (with $t \in [R, 2R]$) such that $D(ct'+t_0)$ is square-free except for primes $p|B$ where the power of such $p|D(t)$ is independent of $t$. In \cite{Mil2} it is shown that for any one-parameter family, there is a choice of $c$ and $t_0$ such that the number of such $t$ is $c_{\mathcal{E}}R + o(R)$ for some $c_{\mathcal{E}} > 0$ if every irreducible polynomial factor of $\Delta(T)$ has degree at most 3 (if not, the claim is true if we assume either the ABC or Square-free Sieve Conjecture). We let $\mathcal{F}_R'$ denote the sieved family.
\end{defi}

\begin{defi}[Average number of zeros in a family]\label{defn:Zavelocglob} Let $\mathcal{E}: y^2 = x^3 + A(T)x+B(T)$ be a one-parameter family of elliptic curves over $\Q(T)$ with specialized curves $E_t$ with conductors $N_t$. Assume GRH and write the non-trivial zeros of $\Lambda(s,E_t)$ as $\frac12+i\gamma_{t,j}$, and set \be \frac{\log N}{2\pi} \ := \ \frac1{R} \sum_{t=R}^{2R} \frac{\log N_t}{2\pi}. \ee
The average number of zeros with imaginary part at most $\tau$ (in absolute value) under the global and local renormalizations are defined to be \bea Z_{{\rm ave},\mathcal{E},R}^{({\rm global})}(\tau) & \ := \ & \frac1{R} \sum_{t=R}^{2R} \#\left\{j: \gamma_{t,j} \frac{\log N}{2\pi} \in [-\tau, \tau]\right\} \nonumber\\ Z_{{\rm ave},\mathcal{E},R}^{({\rm local})}(\tau) & \ := \ & \frac1{|\mathcal{F}_R\ '|} \sum_{t=R \atop t \in \mathcal{F}_R'}^{2R} \#\left\{j: \gamma_{t,j} \frac{\log N_t}{2\pi} \in [-\tau, \tau]\right\}. \eea

\end{defi}

The Birch and Swinnerton-Dyer conjecture implies that, for families where half the curves have even and half have odd sign,
\be Z_{{\rm ave},\mathcal{E},R}^{({\rm global})}(0)\ =\ Z_{{\rm ave},\mathcal{E},R}^{({\rm local})}(0)\ \ge\ r + \frac12. \nonumber \ee


Our main results are upper and lower bounds for how many normalized zeros there are on average in the interval $[-\tau, \tau]$, in particular, how small we may take $\tau$ and be assured on average that there are $r + \frac12$ zeros in the interval.

%
%

\begin{thm}\label{thm:mainbound} Let $\mathcal{E}$ be a one-parameter family of elliptic curves of geometric rank $r$ over $\Q(T)$; if $\mathcal{E}$ is not a rational surface (see Remark \ref{rek:commentstate} for a definition) then assume Tate's conjecture. Additionally, if we are using the local renormalization of the zeros we must assume either the ABC or the Square-free Sieve Conjecture if the discriminant has an irreducible polynomial factor of degree at least 4.

Let $\sigma$ be chosen such that we can compute the $1$-level density (defined in \S\ref{sec:oneleveldensity}) for even Schwartz test functions $\phi$ with ${\rm supp}(\hphi) \subset (-\sigma, \sigma)$; see Theorem \ref{thm:millerthesis} for details on what $\sigma$ are permissible for a given family.

Then

\bi

\item \textbf{Lower bounds for the average number of normalized zeros in $[-\tau, \tau]$:} Let the notation be as in Definition \ref{defn:Zavelocglob}, and assume GRH. Let $h$ be any even, twice continuously differentiable function supported on $[-1, 1]$ and monotonically decreasing on $[0,1]$. For fixed $\tau > 0$ let $f(y) = h(2y/\sigma)$, $g(y) = (f \ast f)(y)$ (the convolution of $f$ with itself), and let $\phi(x)$ equal the Fourier transform of $g(y) + (2\pi\tau)^{-2} g''(y)$. Note ${\rm supp}(\widehat{\phi}) \subset (-\sigma, \sigma)$ and $\phi(x)$ is non-negative for $|x| < \tau$ and non-positive for $|x| > \tau$. Then  \be Z_{{\rm avg},\mathcal{E},R}^{({\rm global})}(\tau),\ Z_{{\rm avg},\mathcal{E},R}^{({\rm local})}(\tau) \ \ \ge \ \left(r + \frac12\right) + \frac{\widehat{\phi}(0)}{\phi(0)} + O\left(\frac{\log\log R}{\phi(0)\log R}\right), \ee where \be \frac{\widehat{\phi}(0)}{\phi(0)} \ = \
\frac{(\int_0^1 h(u)^2 du)+(\frac{1}{\sigma\tau\pi})^2(\int_0^1 h(u)h''(u) du)}{\sigma(\int_0^1 h(u)du)^2}.\ee
If we let $\tau_{\rm BSD}(\sigma)$ denote the value of $\tau$ such that we are assured of at least $r+\frac12$ zeros on average (as $R\to\infty$) in $[-\tau, \tau]$ given that we can compute the 1-level density for test functions whose Fourier transform is supported in $(-\sigma, \sigma)$, then \be \tau_{\rm BSD}(\sigma) \ \le \ \frac{1}{\pi}\left(-\frac{\int_0^1 h(u)^2du}{\int_0^1 h(u)h''(u)du}\right)^{-1/2}\ \frac1{\sigma} \ := \ \frac1{\pi C(h)\sigma}. \ee This should be compared to the predictions from the Birch and Swinnerton-Dyer and Parity Conjectures for a generic family, which predict $\tau_{{\rm BSD}}(\sigma) = 0$. In particular, taking \be \twocase{h(x)\ =\ }{(1-x^2)(1-0.233428x^2 + 0.0189588x^4)}{if $|x| \le 1$}{0}{otherwise}\ee  yields \be \tau_{{\rm BSD}}(\sigma)\ \le\ \frac{1}{\pi C(h)\sigma}, \ee where $C(h) \approx 0.63662$ (which is approximately $2/\pi$); note $1/\pi C(h)\sigma$ is approximately $1/2\sigma$. In the arguments below we use $2/\pi$ for brevity without reminding the reader that the numerical calculation is only close to the above. \\

\item \textbf{Upper bounds for the average number of normalized zeros in $[-\tau, \tau]$:}
Let $\psi$ be a twice continuously differentiable even Schwartz test function with ${\rm supp}(\widehat{\psi}) \subset (-\sigma, \sigma)$, $\psi(x) \geq 0$ for all $x$, and $\psi(x)$ monotonically decreasing on $\left[0,\tau\right)$. Then
\bea & &  Z_{{\rm ave},\mathcal{F},R}^{({\rm global})}(\tau),\ Z_{{\rm ave},\mathcal{F},R}^{({\rm local})}(\tau)\nonumber\\ & & \ \ \ \ \ \ \ \ \ \le \
\left(r+\foh\right)+\frac{(r+\foh)(\psi(0)-\psi(\tau)) + \hat{\psi}(0)}{\psi(\tau)} + O\left(\frac{\log\log R}{\psi(0)\log R}\right). \eea If we consider the interval $(-\frac{1}{2\sigma}, \frac{1}{2\sigma})$ from the lower bound,  taking $\psi(x)= \left( \frac{\sin{x\pi \sigma}}{x\pi\sigma}\right)^{2}$ yields the average number of normalized zeros in the limit in this interval is at most $\Big(r + \frac12$ $+$ $\frac1{\sigma}\Big) / \psi(1/2\sigma) = \frac{\pi^2}{4}$ $\Big(r + \frac12$ $+$ $\frac1{\sigma}\Big)$.


\item \textbf{Random matrix theory prediction.} Let $\mathcal{E}$ be a generic one-parameter family of elliptic curves of rank $r$ over $\Q(T)$ with half of the specialized functional equations even and half odd. Assuming the Katz-Sarnak Density Conjecture, as $R\to\infty$ the average number of normalized zeros in $[-\tau, \tau]$ is $(r + \frac12) + 2\tau$; more precisely, random matrix theory predicts \be \lim_{R\to\infty} Z_{{\rm ave},\mathcal{F},R}^{({\rm global})}(\tau),\ Z_{{\rm ave},\mathcal{F},R}^{({\rm local})}(\tau) \ = \ r + \frac12 + 2\tau. \ee In particular, setting $\tau = \frac{1}{2\sigma}$ yields a prediction of $r + \frac12 + 2 \cdot \frac{1}{2\sigma}$ normalized zeros in the limit on average.

\ei

\ \\

In summary, the number of normalized zeros on average as $R\to\infty$ in the interval $\left(-\frac{1}{2\sigma}, \frac{1}{2\sigma}\right)$ satisfy \be r + \frac12 \ \le \ Z_{{\rm ave},\mathcal{E},R}^{({\rm global})}(\tau), \ Z_{{\rm ave},\mathcal{E},R}^{({\rm local})}(\tau) \ \le \ \frac{\pi^2}{4}\left( r + \frac12 + \frac1{\sigma}\right),\ee and this interval contains the prediction from Random Matrix Theory, $r+\frac12+\frac1{\sigma}$.
\end{thm}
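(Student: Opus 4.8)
The plan is to reduce all four bullets to one analytic input, the one‑parameter‑family $1$‑level density of Theorem~\ref{thm:millerthesis}, and then choose extremal test functions that pinch the counting function $\#\{j:\gamma_{t,j}\tfrac{\log N_t}{2\pi}\in[-\tau,\tau]\}$ between their normalizations. The point of departure is that for permissible $\sigma$, any even Schwartz $\phi$ with $\supp{\widehat\phi}\subset(-\sigma,\sigma)$, and either renormalization, Theorem~\ref{thm:millerthesis} gives
\be
\frac1{|\mathcal F|}\sum_t\sum_j\phi\!\left(\gamma_{t,j}\tfrac{\log N_t}{2\pi}\right)\ =\ \widehat\phi(0)+\left(r+\foh\right)\phi(0)+O\!\left(\tfrac{\log\log R}{\log R}\right),
\ee
where the $\left(r+\foh\right)\phi(0)$ packages the $r$ central zeros forced by the geometric rank together with the extra central zero from the half of specializations with odd sign (Helfgott), and the error absorbs the prime sums in the explicit formula and, in the local case, the sieved control of $\log N_t$.

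For the lower bound I would fix an admissible $h$, set $f(y)=h(2y/\sigma)$, $g=f\ast f$, and take $\phi$ to be the Fourier transform of $g+(2\pi\tau)^{-2}g''$; then $\widehat\phi=g+(2\pi\tau)^{-2}g''$ is supported inside $\supp g\subseteq(-\sigma,\sigma)$, and since the transform sends $g''$ to $-4\pi^2x^2\widehat g(x)$ one gets $\phi(x)=\widehat g(x)(1-x^2/\tau^2)=\widehat f(x)^2(1-x^2/\tau^2)$, non‑negative on $[-\tau,\tau]$ and non‑positive off it. Since $h$ is monotone decreasing on $[0,1]$ with $h(1)=0$ it is non‑negative, so $f\ge0$, whence $|\widehat f(x)|\le\widehat f(0)$ and $0\le\phi(x)\le\widehat g(0)=\phi(0)$ on $[-\tau,\tau]$; thus $\phi(\cdot)/\phi(0)$ is pointwise $\le\mathbf 1_{[-\tau,\tau]}$. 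Summing over the zeros of each $\Lambda(s,E_t)$, averaging, feeding in the display, and dividing by $\phi(0)$ gives the stated lower bound, and the quoted closed form for $\widehat\phi(0)/\phi(0)$ follows from $\phi(0)=(\int f)^2$ and $\widehat\phi(0)=\int f^2+(2\pi\tau)^{-2}\int f f''$ after the substitution $u=2y/\sigma$ and one integration by parts (using $h(1)=0$, $h'(0)=0$). Because $\int_0^1 hh''=-\int_0^1(h')^2<0$, that ratio is $\ge0$ exactly when $\tau\ge\tfrac1{\pi\sigma}\bigl(-\int_0^1 h^2/\int_0^1 hh''\bigr)^{-1/2}$, so $\tau_{\rm BSD}(\sigma)\le\tfrac1{\pi C(h)\sigma}$; to make this as small as possible I would minimize the Rayleigh quotient $\int_0^1(h')^2/\int_0^1 h^2$, whose infimum over admissible $h$ is $\pi^2/4$, the first Dirichlet(at~$1$)--Neumann(at~$0$) eigenvalue on $[0,1]$, approached as $h\to\cos(\pi x/2)$ (itself inadmissible, its even extension failing to be $C^1$ at $\pm1$); a polynomial surrogate such as the displayed one then gives $C(h)\approx2/\pi$ and $\tau_{\rm BSD}(\sigma)$ essentially at most $1/(2\sigma)$.

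For the upper bound I would take $\psi\ge0$ with $\supp{\widehat\psi}\subset(-\sigma,\sigma)$ and $\psi$ decreasing on $[0,\tau)$, so that $\psi(x)\ge\psi(\tau)$ for $|x|\le\tau$ and $\psi\ge0$ throughout; then $\psi(\cdot)/\psi(\tau)$ is pointwise $\ge\mathbf 1_{[-\tau,\tau]}$, and the same averaging, division by $\psi(\tau)$, and the rearrangement $[\widehat\psi(0)+(r+\foh)\psi(0)]/\psi(\tau)=(r+\foh)+[(r+\foh)(\psi(0)-\psi(\tau))+\widehat\psi(0)]/\psi(\tau)$ give the claimed upper bound. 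Specializing to the Fej\'er kernel $\psi(x)=(\sin(x\pi\sigma)/(x\pi\sigma))^2$, which is decreasing on $[0,1/\sigma)\supset[0,1/2\sigma)$ with $\psi(0)=1$, $\widehat\psi(0)=1/\sigma$ (the triangle of mass $1/\sigma$ on $(-\sigma,\sigma)$), and $\psi(1/2\sigma)=(2/\pi)^2$, the bound at $\tau=1/2\sigma$ becomes $\tfrac{\pi^2}{4}(r+\foh+1/\sigma)$.

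For the random matrix bullet I would invoke the Katz--Sarnak density conjecture: the scaled low zeros of the sieved family equidistribute by the orthogonal statistics attached to it, and since the specialized signs split evenly the $\tfrac{\sin2\pi x}{2\pi x}$ terms of $W_{\mathrm{SO(even)}}$ and $W_{\mathrm{SO(odd)}}$ cancel in the average, leaving effective density $1+(r+\foh)\delta_0(x)$; integrating over $[-\tau,\tau]$ yields $\lim_R Z_{\rm ave}(\tau)=r+\foh+2\tau$, i.e.\ $r+\foh+1/\sigma$ at $\tau=1/2\sigma$. Combining the three bullets at $\tau=1/2\sigma$ then gives $r+\foh\le Z_{\rm ave}(1/2\sigma)\le\tfrac{\pi^2}{4}(r+\foh+1/\sigma)$, and since $\pi^2/4>1$ the prediction $r+\foh+1/\sigma$ lies inside, which is the summary. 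The genuine obstacle is the input Theorem~\ref{thm:millerthesis}---the explicit‑formula reduction to family averages of $a_t(p)$ and $a_t(p)^2$, the Rosen--Silverman identification of the secondary term with the rank (needing Tate's conjecture off rational surfaces), and the sieve controlling $\log N_t$ in the local normalization; granting it, the only care needed here is checking that $\phi$ changes sign exactly at $\pm\tau$ and is dominated by $\phi(0)$, and executing the constrained optimization that brings $C(h)$ near $2/\pi$.
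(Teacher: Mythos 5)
Your proposal is correct and follows essentially the same route as the paper: the same one-level density input, the same construction $\phi = $ Fourier transform of $g + (2\pi\tau)^{-2}g''$ with $g = f\ast f$, $f(y)=h(2y/\sigma)$ for the lower bound, the Fej\'er kernel with the triangle Fourier transform for the upper bound, and the same Katz--Sarnak computation giving $r+\frac12+2\tau$. The only notable difference is your Rayleigh-quotient/mixed-eigenvalue explanation of why $C(h)$ is bounded by $2/\pi$ (approached by $\cos(\pi x/2)$), which is a cleaner justification than the paper's numerical polynomial optimization but plays only a supplementary role.
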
 


\begin{rek} We obtained our upper bound for $\tau_{\rm BSD}(\sigma)$ by setting $\widehat{\phi}(0)/\phi(0)=0$. The important item to note is that $\tau_{\rm BSD}(\sigma)$ (or any $\tau$) is inversely proportional to the support  $\sigma$. In other words, the larger we may take $\sigma$, the more we may concentrate $\phi$ near the central point and thus the smaller the window. Random matrix theory predicts we may take $\sigma$ arbitrarily large, which would imply we may take $\tau$ arbitrarily small and thus prove the Birch and Swinnerton-Dyer conjecture on average.
\end{rek}


\section{Background material and previous results}\label{sec:ellcurvereview}

\subsection{Elliptic curves}

We quickly review the needed background material on elliptic curves; the reader familiar with the notation and theory may safely skip this subsection. See \cite{Kn,Kob,Sil1,ST} for proofs, as well as the survey \cite{Yo1}.

Let $E$ be an elliptic curve over $\Q$, say $y^2 = x^3 + ax + b$ with $a,b \in \Z$, and set \be E(\Q) \ := \ \{(x,y) \in \Q^2: y^2 = x^3 + ax + b\}. \ee We can define addition of two elements of $E(\Q)$ as follows (see Figure \ref{fig:ellcurve}). 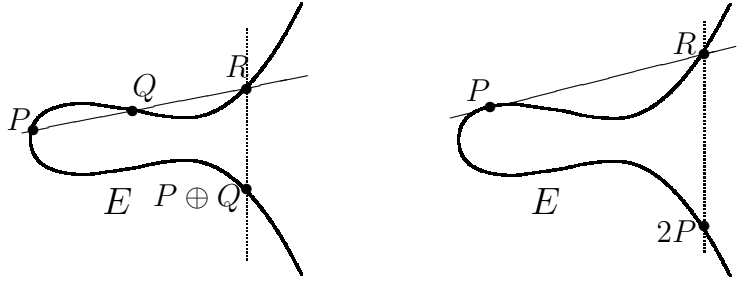
\begin{figure}
\begin{center}
\setlength{\unitlength}{.6pt}
\begin{picture}(200,180)
\put(-150,0){
\begin{picture}(200,180)
\put(-69.5,-59){\usebox{\EllipticCurve}} \thinlines
\put(10,95){\line(5,1){180}} \put(17,97){\circle*6}
\put(0,97){$P$} \put(79.5,109){\circle*6} \put(79.5,118){$Q$}
\put(151.5,122.5){\circle*6} \put(138,130){$R$}
\put(151.5,160){\qbezier[65](0,0)(0,-72.5)(0,-145)}
\put(151.5,59.5){\circle*6} \put(148,45){\makebox(0,0)[br]{$P\oplus
Q$}} \put(70,60){\makebox(0,0)[t]{\large$E$}}
\put(0,0){\makebox(220,-30)[c]{\small Addition of distinct points
$P$ and $Q$}}
\end{picture}
}
\put(120,0){
\begin{picture}(200,180)
\put(-69.5,-59.5){\usebox{\EllipticCurve}}
\put(10,104.5){\line(4,1){180}} \put(35,111){\circle*6}
\put(35,115){\makebox(0,0)[br]{$P$}} \put(170,145){\circle*6}
\put(165,145){\makebox(0,0)[br]{$R$}} \put(170,36){\circle*6}
\put(165,26){\makebox(0,0)[br]{$2P$}}
\put(170,165){\qbezier[65](0,0)(0,-72.5)(0,-145)}
\put(70,60){\makebox(0,0)[t]{\large$E$}}
\put(0,0){\makebox(220,-30)[c]{\small Adding a point $P$ to itself}}
\end{picture}
}
\end{picture}
\end{center}
\caption{\label{fig:ellcurve}The addition law on an elliptic curve.
In the second example the line is tangent to $E$ at $P$.}
\end{figure}
If $P=(x_1, y_1)$ and $Q=(x_2,y_2)$ are in $E(\Q)$, then the line $y=mx+b$ connecting them has rational coordinates.\footnote{We assume the two points are distinct; if they are the same, the argument below must be slightly modified.} Substituting this expression for $y$ into the elliptic curve, we find $(mx+b)^2 = x^3+ax+b$. This is a cubic in $x$ with rational coefficients. By construction two of its roots are $x_1$ and $x_2$, both rational numbers. Thus the third root, say $x_3$, must also be rational. Set $R(P,Q) = (x_3, \sqrt{x_3^3+ax_3+b})$ and $\widetilde{R}(P,Q) = (x_3, -\sqrt{x_3^3+ax_3+b})$. If we define addition by $P\oplus Q = \widetilde{R}(P,Q)$, then this (plus adding a `point at infinity') turns $E(\Q)$ into a finitely generated abelian group. We write $E(\Q)$ as $\Z^r \oplus \mathbb{T}$, where $\mathbb{T}$ is a torsion group\footnote{Mazur \cite{Ma} proved that torsion group is one of the following: $\Z/N \Z$ for $N \in \{1, 2, \dots, 10, 12\}$  or $\Z/2 \times \Z / 2N \Z$ for $N \in \{1, 2, 3, 4\}$.} and $r$ is called the geometric rank of the curve.

Given an elliptic curve $E$ as above, we may associate an $L$-function as follows. Assume $y^2 = x^3 + ax + b$ is a globally minimal Weierstrass equation for $E/\Q$ with discriminant $\Delta = -16(4a^3+27b^2)$ and conductor $N_E$. Set \be a_E(p)\ :=\  p - \#\{(x,y) \in (\Z/p\Z)^2: y^2 \equiv x^3 + ax + b \bmod p\}.\ee Note that the $a_E(p)$'s encode local data, specifically the number of solutions modulo $p$. Hasse proved $|a_E(p)| \le 2 \sqrt{p}$, and we define the $L$-function by \be L(s,E) \ := \ \prod_{p|\Delta} \left(1 - \frac{a_E(p)}{\sqrt{p}} p^{-s}\right)^{-1} \prod_{p\notdiv \Delta} \left(1 - \frac{a_E(p)}{\sqrt{p}} p^{-s} + p^{-2s}\right)^{-1}; \ee we have included the factors of $\sqrt{p}$ so that the completed $L$-function has a functional equation from $s$ to $1-s$ and not $2-s$: \be \Lambda(s,E) \ := \ \left(\frac{\sqrt{N}}{2\pi}\right)^s \Gamma\left(s + \frac12\right) L(s,E) \ = \ \gep_E \Lambda(1-s,E), \ee where $\gep_E \in \{1,-1\}$ is the sign of the functional equation. Following the work of Wiles \cite{Wi}, Taylor-Wiles \cite{TW} and Breuil-Conrad-Diamond-Taylor \cite{BCDT}, we may associate a weight 2 modular form $f$ to any elliptic curve $E$, where the level of $f$ equals the conductor $N_E$ of $E$. We have $\Lambda(s,f) = \Lambda(s,E)$; in particular, the completed $L$-function converges for all $s$. We call the order of vanishing of $\Lambda(s,E)$ at $s=1/2$ the analytic rank of $E$.

The Birch and Swinnerton-Dyer conjecture \cite{BS-D1,BS-D2} states\footnote{There is a more precise form of the conjecture which relates the leading term in the Taylor expansion to the period integral, regulator, Tamagawa numbers and the Tate-Shafarevich group, but this version is not needed for our purposes.} that the order of vanishing of $\Lambda(s,E)$ at the central point $s=1/2$ equals the rank of the Mordell-Weil group $E(\Q)$, or that the analytic rank equals the geometric rank. Sadly, we are far from being able to prove this, though the evidence for the conjecture is compelling, especially in the case of complex multiplication and rank at most 1 \cite{Bro,CW,GKZ,GZ,Kol1,Kol2,Ru}.
In addition there is much suggestive numerical evidence for the conjecture; for example, for elliptic curves with modest geometric rank $r$, numerical approximations of the first $r-1$ Taylor coefficients are consistent with these coefficients vanishing.



\subsection{Explicit Formula}

One powerful tool for investigating the Birch and Swinnerton-Dyer conjecture is the Explicit Formula (see \cite{RS} for a proof for a general $L$-function, or \cite{Mil1} for the calculation for elliptic curves), which connects the zeros of an $L$-functions to the Fourier coefficients.

\begin{thm} Let $\phi$ be an even, twice continuously differentiable test-function whose Fourier transform \be \widehat{\phi}(y) \ := \ \int_{-\infty}^\infty \phi(x) e^{-2\pi i x y} dx \ee has compact support, and denote the non-trivial zeros of $\Lambda(s,E)$ by $\frac12 + i\gamma_E^{(j)}$ (under the Generalized Riemann Hypothesis, each $\gamma_E^{(j)} \in \R$). Then
\begin{eqnarray}\label{eq:thmef}
\sum_{\gamma_E^{(j)}} \phi\left(\gamma_E^{(j)} \frac{\log
N_E}{2\pi}\right) & = & \widehat{\phi}(0) + \phi(0) -  2 \sum_p  \frac{a_E(p)\log p}{p\log N_E}\ \widehat{\phi} \left( \frac{\log p}{\log N_E} \right)  \nonumber\\ & & - 2
\sum_p \frac{a_E^2(p)\log p}{p^2 \log N_E}\  \widehat{\phi} \left(\frac{2 \log p}{\log
N_E} \right)  + O\left(\frac{\log \log
N_E}{\log N_E}\right).\ \ \ \ \ \ \ \ \ \
\end{eqnarray}
\end{thm}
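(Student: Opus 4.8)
The plan is to run the Riemann--Weil explicit formula for the entire function $\Lambda(s,E)$, keeping track of the conductor throughout. I would begin from the two expressions for its logarithmic derivative: since $\Lambda(s,E)$ is entire of order $1$ with zeros exactly at $\rho_j=\tfrac12+i\gamma_E^{(j)}$, Hadamard factorization gives $\tfrac{\Lambda'}{\Lambda}(s,E)=\sum_{\rho}\bigl(\tfrac1{s-\rho}+\tfrac1\rho\bigr)+(\text{const})$, while differentiating $\Lambda(s,E)=\left(\tfrac{\sqrt{N_E}}{2\pi}\right)^{s}\Gamma(s+\tfrac12)L(s,E)$ gives $\tfrac{\Lambda'}{\Lambda}(s,E)=\tfrac12\log N_E-\log2\pi+\tfrac{\Gamma'}{\Gamma}(s+\tfrac12)+\tfrac{L'}{L}(s,E)$. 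Differentiating the Euler product, and using Hasse's bound to write the good factor as $(1-\alpha_p p^{-s})^{-1}(1-\beta_p p^{-s})^{-1}$ with $|\alpha_p|=|\beta_p|=1$, $\alpha_p\beta_p=1$, $\alpha_p+\beta_p=a_E(p)/\sqrt p$, I obtain $-\tfrac{L'}{L}(s,E)=\sum_p\sum_{k\ge1}(\alpha_p^{k}+\beta_p^{k})(\log p)p^{-ks}$ (with the obvious finite modification at the ramified primes); the coefficients that matter are $\alpha_p+\beta_p=a_E(p)/\sqrt p$ at $k=1$ and $\alpha_p^{2}+\beta_p^{2}=a_E(p)^{2}/p-2$ at $k=2$, and the $k\ge3$ tail is dominated by $\sum_p O(p^{-3/2}\log p)$.

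Next I would fix $\phi$ even, twice continuously differentiable, with $\widehat\phi$ supported in $[-U,U]$; Paley--Wiener then makes $\phi$ entire and $O(x^{-2})$ on horizontal lines. Setting $G(s):=\phi\!\left(\tfrac{\log N_E}{2\pi}\cdot\tfrac{s-1/2}{i}\right)$, so that $\sum_j G(\rho_j)$ is exactly the left side of \eqref{eq:thmef}, I would apply the residue theorem to get $\sum_j G(\rho_j)=\tfrac1{2\pi i}\bigl(\int_{(1+\delta)}-\int_{(-\delta)}\bigr)G(s)\tfrac{\Lambda'}{\Lambda}(s,E)\,ds$, the horizontal segments vanishing by the decay of $G$. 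The functional equation $\tfrac{\Lambda'}{\Lambda}(s,E)=-\tfrac{\Lambda'}{\Lambda}(1-s,E)$ combined with $G(1-s)=G(s)$ (this is exactly where evenness of $\phi$ is used) folds the two vertical lines onto one, which I would then push to $\mathrm{Re}(s)=\tfrac12$; no poles are crossed, since $\Lambda$ is entire and $\tfrac{\Gamma'}{\Gamma}(s+\tfrac12)$ is regular there.

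On $\mathrm{Re}(s)=\tfrac12$, writing $s=\tfrac12+it$ and substituting $u=t\tfrac{\log N_E}{2\pi}$: the conductor constant $\tfrac12\log N_E$ cancels the Jacobian and produces $\widehat\phi(0)$; the digamma term, growing only like $\log(2+|t|)$, contributes $O(\log\log N_E/\log N_E)$ after the substitution (the interaction of the subleading Stirling term with the slow decay of $\phi$). For the Dirichlet part, moving $\tfrac1{\pi i}\int G(s)n^{-s}\,ds$ to the central line and invoking Fourier inversion replaces each $n$ by $\tfrac{2}{\sqrt n\,\log N_E}\widehat\phi\!\left(\tfrac{\log n}{\log N_E}\right)$, so the $k=1$ coefficients assemble into $-2\sum_p\tfrac{a_E(p)\log p}{p\log N_E}\widehat\phi\!\left(\tfrac{\log p}{\log N_E}\right)$, the $k=2$ coefficients into $-2\sum_p\tfrac{a_E(p)^2\log p}{p^2\log N_E}\widehat\phi\!\left(\tfrac{2\log p}{\log N_E}\right)$ together with the separate ``$-2$''-piece $+\sum_{p\nmid\Delta}\tfrac{4\log p}{p\log N_E}\widehat\phi\!\left(\tfrac{2\log p}{\log N_E}\right)$, and a Mertens estimate evaluates this last sum as $\phi(0)+O(\log\log N_E/\log N_E)$ (the error absorbing the missing ramified primes). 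The remaining $k\ge3$ and ramified-prime leftovers are $O(1/\log N_E)$, and collecting everything yields \eqref{eq:thmef}.

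I expect essentially all the real work to sit in two estimates. First, the termwise shift of $\sum_n(\cdots)n^{-s}$ from $\mathrm{Re}(s)=1+\delta$ to $\mathrm{Re}(s)=\tfrac12$ leaves the region of absolute convergence and must be justified by a truncated or smoothed Perron argument; what rescues it is that $\widehat\phi$ has compact support, so $\widehat\phi(\tfrac{\log n}{\log N_E})=0$ for $n>N_E^{U}$ and only a finite sum ever contributes. Second, and more delicately, extracting the main terms $\widehat\phi(0)+\phi(0)$ \emph{cleanly} while controlling the rest by $O(\log\log N_E/\log N_E)$ rather than some weaker power of $\log N_E$ forces one to track the subleading Stirling terms of $\tfrac{\Gamma'}{\Gamma}(s+\tfrac12)$ against the slow $|u|^{-2}$ decay of $\phi$, to bound $\sum_{p\mid N_E}\tfrac{\log p}{p}$ (which can be as large as $\sim\log\log N_E$), and to handle carefully the passage between $\log N_E$, $\log|\Delta|$ and the Mertens asymptotics. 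This bookkeeping, not any isolated trick, is the real content of the proof, and it is exactly what \cite{Mil1} carries out for elliptic curves and \cite{RS} in general.
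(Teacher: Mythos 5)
Your outline is correct and is essentially the argument the paper itself relies on: the paper does not prove this theorem but cites \cite{RS} and \cite{Mil1}, and those references carry out exactly your contour-shift explicit-formula computation, including the key bookkeeping you identify (the conductor term giving $\widehat{\phi}(0)$, the $\alpha_p^2+\beta_p^2=a_E(p)^2/p-2$ split whose ``$-2$'' piece yields $\phi(0)$ via Mertens, and the ramified primes and digamma term absorbed into the $O(\log\log N_E/\log N_E)$ error). No substantive gap to report.
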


Using the explicit formula, Mestre proved\footnote{Mestre actually proved more, as his results hold for any weight $k$ cuspidal newform, and not just elliptic curves (which correspond to weight 2 cuspidal newforms).}

\begin{thm}[Mestre \cite{Mes}] Assuming the Generalized Riemann Hypothesis: \ben

\item The order of vanishing at the central point is $O(\log N_E / \log\log N_E)$.

\item There is an absolute constant $B$ such that the first zero above the central point occurs before $B/\log \log N_E$.

\een
\end{thm}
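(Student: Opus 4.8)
The plan is to feed the explicit formula \eqref{eq:thmef} carefully chosen even test functions, using for a single curve only Hasse's bound $|a_E(p)|\le 2\sqrt p$ in the prime sums — there is no cancellation available in $\sum_p a_E(p)\cdots$ for a \emph{fixed} curve, and this is exactly what will pin the scale at $1/\log\log N_E$. Throughout write $r$ for the analytic rank of $E$ and normalize the zeros by $\widetilde\gamma_j:=\gamma_E^{(j)}\tfrac{\log N_E}{2\pi}$; under GRH the $\widetilde\gamma_j$ are real, and since $L(s,E)$ has real coefficients they occur in pairs $\pm\widetilde\gamma_j$.

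\textbf{Upper bound on the rank.} Here I would take the Fej\'er kernel $f(u)=\big(\tfrac{\sin\pi u}{\pi u}\big)^2$, so $f\ge 0$, $f(0)=1$, $\widehat f(\xi)=\max(1-|\xi|,0)\ge 0$, $\widehat f(0)=1$, and for a parameter $\sigma>0$ set $\phi(x):=f(\sigma x)$; then $\phi(0)=1$, $\widehat\phi(0)=\|\widehat\phi\|_\infty=1/\sigma$ and $\supp{\widehat\phi}\subseteq(-\sigma,\sigma)$. Since $\phi\ge 0$ and the $r$ central zeros each contribute $\phi(0)$, the left-hand side of \eqref{eq:thmef} is $\ge r$; on the right-hand side $\widehat\phi\ge 0$ makes the $a_E^2(p)$-term non-positive (so it may be dropped), and Hasse gives
\be \Big|\, 2\sum_p \frac{a_E(p)\log p}{p\log N_E}\,\widehat\phi\Big(\tfrac{\log p}{\log N_E}\Big)\Big| \ \ll\ \frac{\|\widehat\phi\|_\infty}{\log N_E}\sum_{p<N_E^{\sigma}}\frac{\log p}{\sqrt p} \ \ll\ \frac{N_E^{\sigma/2}}{\sigma\log N_E}. \ee
Choosing $\sigma = c\,\tfrac{\log\log N_E}{\log N_E}$ with any fixed $c<2$ makes this error $O\big(\tfrac{(\log N_E)^{c/2}}{\log\log N_E}\big)=o\big(\tfrac{\log N_E}{\log\log N_E}\big)$, while $\widehat\phi(0)/\phi(0)=1/\sigma=\tfrac{\log N_E}{c\log\log N_E}$ dominates the $\phi(0)$ term; hence $r\le \tfrac{\log N_E}{c\log\log N_E}\,(1+o(1))=O(\log N_E/\log\log N_E)$.

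\textbf{Bounding the first zero.} I would argue by contradiction: suppose the first zero above the central point exceeds $h/\log\log N_E$ for a large absolute constant $h$. By the $\pm$ symmetry, the only zeros with $|\widetilde\gamma_j|\le L:=\tfrac{h\log N_E}{2\pi\log\log N_E}$ are then the $r$ at the center. Let $S_-$ be a (smoothed) Beurling--Selberg minorant of $\chi_{[-L,L]}$ with $\supp{\widehat{S_-}}\subseteq(-\sigma,\sigma)$, $\sigma=c\,\tfrac{\log\log N_E}{\log N_E}$, $c<2$ fixed: thus $S_-(x)\le\chi_{[-L,L]}(x)$ (so $S_-\le 0$ off $[-L,L]$), $0<S_-(0)\le 1$, $\|\widehat{S_-}\|_\infty\ll L$, and $\widehat{S_-}(0)=\int S_- = 2L-O(1/\sigma)$. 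Because the only zeros in $[-L,L]$ are central and $S_-\le 0$ outside, the left side of \eqref{eq:thmef} is $\le r\,S_-(0)\le r$; the prime sums, bounded crudely via $\|\widehat{S_-}\|_\infty\ll L$, contribute $\ll L\,N_E^{\sigma/2}/\log N_E = o(\log N_E/\log\log N_E)$ and $\ll \sigma L = O(1)$ respectively. Therefore
\be r \ \ge\ \widehat{S_-}(0)-O(1)-o\!\Big(\tfrac{\log N_E}{\log\log N_E}\Big)\ =\ \tfrac{\log N_E}{\log\log N_E}\Big(\tfrac{h}{\pi}-C_1-o(1)\Big) \ee
for an absolute constant $C_1$. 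Comparing with $r\le\tfrac{\log N_E}{c'\log\log N_E}(1+o(1))$ from the rank bound (fixed $c'<2$) gives a contradiction for all large $N_E$ once $h$ exceeds an absolute constant depending only on the implied constants above; the finitely many curves of small conductor are absorbed into $B$. Hence the first zero above the center is at most $B/\log\log N_E$.

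\textbf{Main obstacle.} The essential difficulty — and the reason one only reaches the scale $1/\log\log N_E$ — is the lack of cancellation in $\sum_p\tfrac{a_E(p)\log p}{p}\widehat\phi(\cdots)$ for a fixed $E$: with only $|a_E(p)|\le 2\sqrt p$ available, the admissible support is forced down to $\sigma\asymp\log\log N_E/\log N_E$, whence $\widehat\phi(0)/\phi(0)\asymp\log N_E/\log\log N_E$. (Replacing this trivial bound by genuine cancellation is precisely what averaging over a one-parameter family will supply in what follows, permitting a larger $\sigma$ and hence a smaller window.) A secondary point in the first-zero argument is that one cannot take a rapidly decaying non-negative bump and bound the tail $\sum_{|\widetilde\gamma_j|>L}\phi(\widetilde\gamma_j)$ directly — the low-lying zeros of a single $L$-function are not under good enough control for that — so one instead forces $\phi\le 0$ off $[-L,L]$ with $\phi(0)=O(1)$, keeping $\widehat\phi(0)/\phi(0)\asymp L$ large.
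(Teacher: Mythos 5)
The paper itself offers no proof of this statement: it is quoted verbatim from Mestre \cite{Mes}, and the only hint of its argument in the paper is the later remark in \S\ref{sec:lowerboundnumzeros} that the test function built there from $g(y)+(2\pi\tau)^{-2}g''(y)$ is ``almost surely similar to the construction implicit in Mestre's work.'' Your sketch is essentially a correct reconstruction of that standard argument: positivity in the explicit formula \eqref{eq:thmef}, only the Hasse bound $|a_E(p)|\le 2\sqrt p$ available in the prime sums for a fixed curve, and hence a forced support $\sigma\asymp \log\log N_E/\log N_E$, which is precisely where the $1/\log\log N_E$ scale comes from (and precisely the point the paper makes when contrasting Mestre's bound with the family-averaged results). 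Your test functions --- the rescaled Fej\'er kernel for the rank bound and a band-limited Beurling--Selberg minorant of the indicator of $[-L,L]$ for the first-zero bound --- play exactly the role of the paper's $\widehat g(x)\left(1-(x/\tau)^2\right)$-type functions: nonnegative at the centre, nonpositive outside the window, with $\widehat\phi(0)/\phi(0)$ of order $L$. The numerology is right: $\sum_{p<N_E^\sigma}\log p/\sqrt p\ll N_E^{\sigma/2}=(\log N_E)^{c/2}$, so for fixed $c<2$ the first prime sum is $o(\log N_E/\log\log N_E)$ and the second is $O(1)$ (resp.\ $O(\sigma L)=O(1)$ in the minorant step), $\widehat{S_-}(0)=2L-O(1/\sigma)$ is the standard Selberg loss, and the contradiction threshold for $h$ is an absolute constant, with small conductors absorbed into $B$.

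Two points you should make explicit if this were written out in full. First, the error term $O\left(\frac{\log\log N_E}{\log N_E}\right)$ in \eqref{eq:thmef} has an implied constant depending on the test function, and you apply the formula with functions varying with $N_E$ whose mass ($\widehat\phi(0)$, $\widehat{S_-}(0)$) grows like $\log N_E/\log\log N_E$; one must either rederive the formula with errors controlled by quantities such as $\int|\phi|$, $\|\widehat\phi\|_\infty$ and the support (which is what Mestre does, with explicit constants), or verify this uniformity --- it does work out, since the archimedean contribution is $O\left(\frac1{\log N_E}\int|\phi(x)|\log(2+|x|)\,dx\right)$, which is $O(1)$ for your choices. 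Second, the Beurling--Selberg minorant is neither compactly supported in $x$ nor Schwartz; you should note it is smooth, decays like $x^{-2}$, and has compactly supported Fourier transform, which is all the explicit formula requires (the paper makes the analogous remark about its own non-Schwartz $\psi$). With those caveats your argument is sound and is, in substance, Mestre's.
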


From the functional equation, however, we expect the first zero above the central point to be on the order of $1/\log N_E$, and not $1/\log\log N_E$. Thus Mestre's result is significantly larger than what we expect the truth to be; however, it holds for \emph{any} elliptic curve. The situation is very different if instead we consider families of elliptic curves. By averaging the explicit formula over the family and exploiting cancelation in the sums of the Fourier coefficients $a_E(p)$, it is possible to prove (on average) significantly better results.

Numerous studies have been concerned with bounding the average rank in families. We list some of the frequently studied families below (note that, for technical reasons, often one has to do some sieving and remove some curves in order to make certain sums tractable). These results are obtained by averaging the explicit formula over some family $\mathcal{F}_R$, where $R$ is a parameter localizing the conductors, and sending $R\to\infty$.

\bi

\item The family of all elliptic curves: $y^2 = x^3 + Ax + B$, and $\mathcal{F}_R = \{(A,B): |A| \le R^2, |B| \le R^3\}$ (or something along these lines).

\item One parameter families over $\Q(T)$: $y^2 = x^3 + A(T)x+B(T)$, with $A(T)$, $B(T)$ $\in$ $\Z[T]$ and either $\mathcal{F}_R = \{t: R \le t \le 2R\}$ or a sub-family of this where the conductors are given by a polynomial.

\item Quadratic (or higher) twists of a fixed elliptic curve: $d y^2 = x^3 + ax + b$, with $\mathcal{F}_R = \{d: d\le R\ {\rm a\ fundamental\ discriminant}\}$.

\ei

The current record belongs to M. Young \cite{Yo2}, who showed the average rank in the family of all elliptic curves is bounded by $25/14 \approx 1.79$; results for one-parameter families and quadratic twist families are significantly worse. For a sample of the literature, see \cite{BMSW,Bru,BM,CPRW,DFK,Gao,Go,GM,H-B,Kow1,Kow2,Mi,Mil2,RSi,RuSi,Sil3,Yo2,ZK} (especially the surveys \cite{BMSW, Kow1, RuSi}).

\subsection{The one-level density}\label{sec:oneleveldensity}

For a family $\mathcal{F}_R$ of $L$-functions ordered by conductor (with $R\to\infty$), the averaged explicit formula is called the one-level density. Specifically, let $\phi$ be an even Schwartz test-function whose Fourier transform is supported in $(-\sigma, \sigma)$, and denote the zeros of $L(s,f)$ by $1/2+i\gamma_{f,\ell}$ (under GRH each $\gamma_{f,\ell} \in \R$). Let $N_f$ denote the analytic conductor of $L(s,f)$. We define the one-level density by \begin{eqnarray}
D_{\mathcal{F}_R}(\phi)\ :=\ \frac{1}{|\mathcal{F}_R|} \sum_{f\in
\mathcal{F}_R} \sum_{f} \phi\left(\gamma_{f}^{(j)}\frac{\log
N_f}{2\pi}\right).
\end{eqnarray} This statistic has been fruitfully used by many researchers to study the zeros of elliptic curves $L$-functions (as well as other families of $L$-functions) near the central point.

Unlike the $n$-level correlations, which are the same for any cuspidal newform arising from an automorphic representation (see \cite{Hej,Mon,RS}), the one-level density for a family of $L$-functions depends on the symmetry of the family. Katz and Sarnak \cite{KS1, KS2} conjecture that families of $L$-functions correspond to classical compact groups; specifically, the behavior as the conductors tend to infinity of zeros (respectively values) of $L$-functions is well-modeled by the limit as the matrix size tends to infinity of roots (respectively values) of characteristic values of random matrices.\footnote{These conjectures are a natural outgrowth of observed similarities between behavior of $L$-functions and matrix ensembles. While random matrix theory first arose in statistics problems in the early 1900s (see for example \cite{Wis}), it blossomed in the 1950s when it was successfully applied to describe the energy levels of heavy nuclei. Its connections to number theory were first noticed by Montgomery \cite{Mon} and Dyson in the 1970s in studies of the pair correlation of zeros of $\zeta(s)$. See \cite{FM} for a survey on the development of the subject and some of the connections between the two fields.} They conjecture that \be \lim_{R\to\infty} D_{\mathcal{F}_R}(\phi) \ = \ \int \phi(x) W_{G(\mathcal{F})}(x)dx,\ee where $G(\mathcal{F})$ indicates unitary, symplectic or orthogonal (possibly ${\rm SO(even)}$ or ${\rm SO(odd)}$) symmetry; this has been observed in numerous families. Note by Parseval's theorem that \be \int \phi(x)  W_{G(\mathcal{F})}(x)dx \ = \ \int \widehat{\phi}(y) \widehat{W}_{{G(\mathcal{F})}}(y)dy. \ee

Let $I(u)$ be the characteristic function of $[-1,1]$. Katz and Sarnak prove the Fourier transforms of the one-level densities of the classical compact groups are
\begin{eqnarray}
\widehat{W}_{{{\rm SO(even)}} }(u) & = & \delta(u) + \foh I(u) \nonumber\\
\widehat{W}_{{{\rm SO}} }(u) & = & \delta(u) + \foh \nonumber\\
\widehat{W}_{{{\rm SO(odd)}} }(u) & = & \delta(u) - \foh I(u) + 1
\nonumber\\ \widehat{W}_{{{\rm USp}} }(u) & = & \delta(u) - \foh I(u)
\nonumber\\ \widehat{W}_{{{\rm U}} }(u) & = & \delta(u). \end{eqnarray}
For functions whose Fourier Transforms are supported in $[-1,1]$,
the three orthogonal densities are indistinguishable, though they
are distinguishable from $U$ and $Sp$. To detect differences
between the orthogonal groups using the $1$-level density, one
needs to work with functions whose Fourier Transforms are
supported beyond $[-1,1]$.\footnote{One can also distinguish between the various orthogonal groups by looking at the 2-level density, as these three ensembles have distinct behavior for arbitrarily small support; see for instance \cite{Mil2}. If $n \ge 3$, the determinan expansions for the $n$-level density are hard to work with; in fact, in Gao's thesis \cite{Gao} he is able to compute the number theory and random matrix theory results for greater support than he can show agreement. In place of the determinant formulas, one can also use expansions from \cite{HM}; though these hold for smaller support, they are sometimes easier for comparisons.}

For families of elliptic curves with rank, it is useful to consider additional subgroups of the classical compact groups above. We consider the $N\to\infty$ scaling limits of matrices of the form $$\mattwo{I_{r,r}}{}{}{g},$$ where $I_{r,r}$ is the $r\times r$ identity matrix and $g$ is an $N\times N$ orthogonal matrix (drawn from either the full orthogonal family or one of the split families, namely even or odd). These matrices have $r$ forced eigenvalues at 1 (or $r$ eigenangles at 0) for each $g$; thus as we vary $g$ in one of the three families we obtain the same one-level densities as before \emph{except} for an additional factor of $r$. Explicitly,
\begin{eqnarray}
\widehat{W}_{r;{\rm SO(even)}}(u) & = & \delta(u) + \foh I(u) + r \nonumber\\
\widehat{W}_{r;{{\rm SO}} }(u) & = & \delta(u) + \foh + r \nonumber\\
\widehat{W}_{r;{{\rm SO(odd)}} }(u) & = & \delta(u) - \foh I(u) + 1 + r. \end{eqnarray}
For our elliptic curve families, we must evaluate the average over $\mathcal{F}_R$ or $\mathcal{F}_R'$ of \eqref{eq:thmef}. Note that almost all of the conductors will be a bounded power of $R$ for $t \in [R, 2R]$. If we rescale each elliptic curve $E$'s zeros by the correct local factor, namely $(\log N_E)/2\pi$, we have
\begin{eqnarray}\label{eq:thmefaveragedlocal}
D_{\mathcal{F}_R}^{{\rm local}}(\phi) & \ = \ & \frac1{|\mathcal{F}_R|}\sum_{E\in\mathcal{F}_R} \sum_{\gamma_E^{(j)}} \phi\left(\gamma_E^{(j)} \frac{\log
N_E}{2\pi}\right)\nonumber\\ & = & \widehat{\phi}(0) + \phi(0) -  2 \frac1{|\mathcal{F}_R|}\sum_{E\in\mathcal{F}_R}\sum_p  \frac{a_E(p)\log p}{p\log N_E}\ \widehat{\phi} \left( \frac{\log p}{\log N_E} \right)  \nonumber\\ & & - 2 \frac1{|\mathcal{F}_R|}\sum_{E\in\mathcal{F}_R}
\sum_p \frac{a_E^2(p)\log p}{p^2 \log N_E}\  \widehat{\phi} \left(\frac{2 \log p}{\log
N_E} \right)  + O\left(\frac{\log \log
R}{\log R}\right).\ \ \ \ \ \ \ \ \ \
\end{eqnarray} The difficulty with this expression is that, as the conductors are varying, we cannot easily pass the sum over the family through the test-function to the Fourier coefficients $a_E(p)$ and $a_E(p)^2$. By sieving it is possible to surmount these technical details; this is the main result in \cite{Mil2}.

If instead we rescale each elliptic curve $E$'s zeros by the global factor, namely \be\frac{\log N}{2\pi}\ =\ \frac1{|\mathcal{F}_R|} \sum_{t \in \mathcal{F}_R} \frac{\log N_E}{2\pi}, \ee then we find \begin{eqnarray}\label{eq:thmefaveragedglobal}
D_{\mathcal{F}_R}^{{\rm global}}(\phi) & \ = \ & \frac1{|\mathcal{F}_R|}\sum_{E\in\mathcal{F}_R} \sum_{\gamma_E^{(j)}} \phi\left(\gamma_E^{(j)} \frac{\log
N}{2\pi}\right)\nonumber\\ & = & \widehat{\phi}(0) + \phi(0) -  2 \frac1{|\mathcal{F}_R|}\sum_{E\in\mathcal{F}_R}\sum_p  \frac{a_E(p)\log p}{p\log N}\ \widehat{\phi} \left( \frac{\log p}{\log N} \right)  \nonumber\\ & & - 2 \frac1{|\mathcal{F}_R|}\sum_{E\in\mathcal{F}_R}
\sum_p \frac{a_E^2(p)\log p}{p^2 \log N}\  \widehat{\phi} \left(\frac{2 \log p}{\log
N} \right)  + O\left(\frac{\log \log
N}{\log N}\right).\ \ \ \ \ \ \ \ \ \
\end{eqnarray} The analysis is significantly easier here, as now we can pass the summation over the family past the test-function and exploit cancelation in sums of the Fourier coefficients $a_E(p)$ and $a_E(p)^2$.

We quote the best known results for general one-parameter families.

\begin{thm}[Miller \cite{Mil1, Mil2}]\label{thm:millerthesis}\ \\
\noindent Notation:
\bi

\item  Let $\mathcal{E}$ be a one-parameter family of elliptic curves of geometric rank $r$ over $\Q(T)$.

\item Let $\phi$ be a twice continuously differentiable function\footnote{While the theorem was proved under the assumption that $\phi$ is Schwartz, a careful analysis of the argument reveals it suffices that $\phi$ be twice differentiable.} with ${\rm supp}(\widehat{\phi}) \subset (-\sigma, \sigma)$.

\item Consider the sieved family (see Definition \ref{defi:sievedfamily}), and denote the degree of the conductor polynomial by $m$.

\item Let $G$ denote either ${\rm SO}$, ${\rm SO(even)}$ or ${\rm SO(odd)}$.

\ei

\noindent Assume

\bi

\item If $\mathcal{E}$ is not a rational surface (see Remark \ref{rek:commentstate} for a definition) then assume Tate's conjecture.

\item If the discriminant has an irreducible polynomial factor of degree at least 4, assume either the ABC or the Square-free Sieve Conjecture.

\ei

Then \be D_{\mathcal{F}_R}^{{\rm local}}(\phi) \ = \ \int \hphi(y) \widehat{W}_{r;G}(y)dy \ = \ \left(r + \frac12\right) \phi(0) + \frac12 \widehat{\phi}(0) + \left(\frac{\log\log R}{\log R}\right) \ee provided $\sigma < \min\left(1/2, 2/3m\right)$; a similar result holds for $D_{\mathcal{F}_R}^{{\rm global}}(\phi)$ (without the assumptions that $\mathcal{E}$ satisfies Tate's hypothesis and without assuming either the ABC or Square-free Sieve Conjecture). \end{thm}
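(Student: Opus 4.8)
The plan is to read off all four items from the averaged explicit formula of Theorem~\ref{thm:millerthesis} by testing it against suitably chosen functions; since that theorem already contains the genuine input (the evaluation of the Fourier‑coefficient sums, including the sieving needed for the local renormalization), what remains is to build the test functions with the right sign and size properties and to run elementary zero‑counting inequalities. Throughout I would use GRH to place each $\gamma_{t,j}$ on the real line, so that for a fixed $E_t$ the quantity $\sum_j\phi(\gamma_{t,j}\tfrac{\log N_t}{2\pi})$ is a sum of real values of $\phi$ over the normalized zeros, comparable to $\#\{j:\gamma_{t,j}\tfrac{\log N_t}{2\pi}\in[-\tau,\tau]\}$; the global and local statements are handled in parallel via the corresponding (global, and local/sieved, respectively) forms of Theorem~\ref{thm:millerthesis}.

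For the \emph{lower bound} I would first verify the advertised properties of $\phi$. Since $h$ is even, $C^2$, supported on $[-1,1]$ and decreasing on $[0,1]$, continuity forces $h(1)=0$ and hence $h\ge 0$; thus $f(y)=h(2y/\sigma)\ge 0$ is supported on $[-\sigma/2,\sigma/2]$, and $g=f\ast f$ is supported on $[-\sigma,\sigma]$ with $\widehat g=\widehat f^{\,2}\ge 0$. Because the $g''$ term multiplies the transform by $-x^2/\tau^2$, one gets $\phi(x)=\widehat g(x)(1-x^2/\tau^2)$, so $\widehat\phi$ is supported in $(-\sigma,\sigma)$, $\phi\ge 0$ on $[-\tau,\tau]$ and $\le 0$ outside, and — using $f\ge 0\Rightarrow|\widehat f(x)|\le\widehat f(0)$ — one has $\phi(x)\le\widehat g(x)\le\widehat g(0)=\phi(0)$ for $|x|\le\tau$. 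These three facts give, for each $t$,
\[
\sum_j\phi\!\left(\gamma_{t,j}\tfrac{\log N_t}{2\pi}\right)\ \le\ \phi(0)\,\#\!\left\{j:\gamma_{t,j}\tfrac{\log N_t}{2\pi}\in[-\tau,\tau]\right\};
\]
averaging over the family, dividing by $\phi(0)$, and inserting $D_{\mathcal F_R}(\phi)=(r+\tfrac12)\phi(0)+\widehat\phi(0)+O(\tfrac{\log\log R}{\log R})$ from Theorem~\ref{thm:millerthesis} yields the claimed lower bound. The closed form of $\widehat\phi(0)/\phi(0)$ is then a short computation from $\phi(0)=(\int f)^2=\sigma^2(\int_0^1 h)^2$, $\widehat\phi(0)=g(0)+(2\pi\tau)^{-2}g''(0)$, $g(0)=\int f^2=\sigma\int_0^1 h^2$ and $g''(0)=\int f''f=(4/\sigma)\int_0^1 hh''$; setting this ratio to $0$ and solving for $\tau$ gives the bound on $\tau_{\mathrm{BSD}}(\sigma)$, and the final numerical claim is the finite one‑variable optimization of $C(h)=\left(-\int_0^1 h^2/\int_0^1 hh''\right)^{1/2}$ over the chosen polynomial family of $h$.

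The \emph{upper bound} is the mirror argument: for $\psi\ge 0$, even, decreasing on $[0,\tau)$ one has $\psi\ge\psi(\tau)$ on $[-\tau,\tau]$, and dropping the nonnegative contribution of zeros with $|\gamma_{t,j}|\tfrac{\log N_t}{2\pi}>\tau$ gives $\sum_j\psi(\cdots)\ge\psi(\tau)\#\{j:\gamma_{t,j}\tfrac{\log N_t}{2\pi}\in[-\tau,\tau]\}$; averaging, dividing by $\psi(\tau)$, inserting Theorem~\ref{thm:millerthesis}, and writing $(r+\tfrac12)\psi(0)=(r+\tfrac12)\psi(\tau)+(r+\tfrac12)(\psi(0)-\psi(\tau))$ produces the stated bound, and the explicit case follows by checking that $(\sin(\pi\sigma x)/(\pi\sigma x))^2$ is admissible (Fejér‑triangle transform on $[-\sigma,\sigma]$, nonnegative, decreasing on $[0,1/\sigma]\supset[0,\tfrac1{2\sigma})$) and plugging in $\psi(0)=1$, $\widehat\psi(0)=1/\sigma$, $\psi(\tfrac1{2\sigma})=4/\pi^2$. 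For the random matrix item I would invoke the Katz--Sarnak conjecture in the form that the normalized zeros equidistribute as $R\to\infty$ with respect to $W_{G(\mathcal F)}$: for a generic rank‑$r$ family split evenly by sign, $\widehat W=\tfrac12\widehat W_{r;\mathrm{SO(even)}}+\tfrac12\widehat W_{r;\mathrm{SO(odd)}}=\delta(u)+(r+\tfrac12)$, i.e. $W(x)=1+(r+\tfrac12)\delta(x)$, so the count over $[-\tau,\tau]$ tends to $\int_{-\tau}^\tau W=(r+\tfrac12)+2\tau$; the summary inequalities then follow by evaluating the lower bound (whose $\widehat\phi(0)/\phi(0)$ is $\ge 0$ at $\tau=\tfrac1{2\sigma}$ for the optimized $h$) and the $\mathrm{sinc}^2$ upper bound at $\tau=\tfrac1{2\sigma}$, with the RMT value sitting between them.

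The main obstacle, granting Theorem~\ref{thm:millerthesis}, is arranging the lower‑bound test function so that \emph{simultaneously} (i) $\widehat\phi$ is supported in $(-\sigma,\sigma)$ so the density formula applies, (ii) $\phi\ge 0$ on $[-\tau,\tau]$ and $\le 0$ off it, and (iii) $\phi$ attains its maximum over $[-\tau,\tau]$ at $0$; it is the combination of squaring ($\widehat g=\widehat f^{\,2}$), the $g''$ correction (which supplies the clean $1-x^2/\tau^2$ factor and the sign change at $\pm\tau$), and $h\ge 0$ that makes all three hold at once — without (iii) one only obtains the weaker $D_{\mathcal F_R}(\phi)/\max_{[-\tau,\tau]}\phi$. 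A secondary point to keep straight is that the $O(\log\log R/\log R)$ error of the density formula carries a constant depending on the test function, so for fixed $\tau$ and $R\to\infty$ it stays of size $O(\log\log R/(\phi(0)\log R))$ as stated, which is why $\tau$ must be held fixed rather than shrunk with $R$.
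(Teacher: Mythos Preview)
Your proposal does not address the stated theorem. Theorem~\ref{thm:millerthesis} is the evaluation of the one-level density itself,
\[
D_{\mathcal{F}_R}(\phi)\;=\;\Bigl(r+\tfrac12\Bigr)\phi(0)+\widehat{\phi}(0)+O\!\left(\tfrac{\log\log R}{\log R}\right)
\]
for $\mathrm{supp}(\widehat\phi)\subset(-\sigma,\sigma)$ with $\sigma<\min(1/2,2/(3m))$. The paper does not prove this result; it is quoted from \cite{Mil1,Mil2}, and its proof lives entirely in those references (control of $\sum_{t}a_{E_t}(p)$ via Rosen--Silverman and Tate's conjecture, control of $\sum_{t}a_{E_t}(p)^2$ via Michel/Deligne, together with the sieving needed to pin down the conductors for the local rescaling). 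Nothing in your write-up touches any of that: you explicitly take Theorem~\ref{thm:millerthesis} as a black box in your first sentence.

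What you have actually written is a proof of Theorem~\ref{thm:mainbound}. Viewed that way, your argument is correct and essentially identical to the paper's: the same construction $\widehat\phi=g+(2\pi\tau)^{-2}g''$ with $g=f\ast f$ for the lower bound, the same $\psi(x)=(\sin(\pi\sigma x)/(\pi\sigma x))^2$ for the upper bound, the same zero-counting inequalities (the paper packages these as Theorem~\ref{thm:boundsabovebelow}), and the same RMT computation with the indicator of $[-\tau,\tau]$. But as a proof of the statement you were given, it is simply off target.
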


\begin{rek} We briefly discuss some consequences and generalizations of the above theorem.
\bi
\item Similar statements hold for quadratic twist families and the family of all elliptic curves.

\item The above result provides support that the zeros of one-parameter families of rank $r$ over $\Q(T)$ are modeled by the scaling limits of orthogonal matrices with $r$ independent eigenvalues of 1.

\item As ${\rm supp}(\hphi) \subset (-1,1)$, the three orthogonal groups have indistinguishable one-level densities. We can see which group correctly models our family by studying the 2-level density, which requires us to understand the distribution of signs of the functional equations in our family.

\ei

\end{rek}


\section{Proof of Theorem \ref{thm:mainbound}}\label{sec:lowerboundnumzeros}

\subsection{Preliminaries}

Before proving Theorem \ref{thm:mainbound}, we first prove general results for the upper and lower bounds in a window of variable size for a general family of $L$-functions. Theorem \ref{thm:mainbound} then follows immediately from Theorem \ref{thm:boundsabovebelow}, Theorem \ref{thm:millerthesis} and the constructions of test-functions satisfying the necessary conditions, which are given below.

\begin{thm}\label{thm:boundsabovebelow} Let $\mathcal{F}_R$ denote a family of $L$-functions, and let $Z_{{\rm ave},\mathcal{F},R}^{({\rm global})}(\tau), Z_{{\rm ave},\mathcal{F},R}^{({\rm local})}(\tau)$ be defined as in Definition \ref{defn:Zavelocglob}. Assume for both normalizations of the zeros that there are constants $a$ and $b$ such that \be\label{eq:density} D_{\mathcal{F}_R}(\phi) \ = \ a \phi(0) + b \widehat{\phi}(0) + O\left(\frac{\log\log R}{\log R}\right), \ee whenever $\phi$ or $\psi$ is a twice continuously differentiable function with Fourier transform supported in $(-\sigma, \sigma)$. If $\phi(x) \ge 0$ for $|x| \le \tau$ and $\phi(x) \le 0$ whenever $|x| \ge \tau$, and if $\phi(x)$ is largest when $x=0$, then
\be Z_{{\rm ave},\mathcal{F},R}^{({\rm global})}(\tau),\ Z_{{\rm ave},\mathcal{F},R}^{({\rm local})}(\tau) \ \ \ge \ a + b\frac{\widehat{\phi}(0)}{\phi(0)} + O\left(\frac{\log\log R}{\phi(0)\log R}\right), \ee while if $\psi(x) \geq 0$ for all $x$ and is monotonically decreasing on $\left(0,\tau\right)$, then \bea  Z_{{\rm ave},\mathcal{F},R}^{({\rm global})}(\tau),\ Z_{{\rm ave},\mathcal{F},R}^{({\rm local})}(\tau) \ \le \ a+\frac{a(\psi(0)-\psi(\tau)) + b\hat{\psi}(0)}{\psi(\tau)} +  O\left(\frac{\log\log R}{\psi(0)\log R}\right). \eea
\end{thm}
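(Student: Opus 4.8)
The plan is to obtain both inequalities from a single soft positivity argument. By definition $D_{\mathcal{F}_R}(\phi)$ is the family-average of the weighted zero count $\sum_j \phi\bigl(\gamma_{f,j}\tfrac{\log N_f}{2\pi}\bigr)$, so prescribing the sign and size of the weight on and off the window $[-\tau,\tau]$ converts the density identity \eqref{eq:density} directly into a one-sided estimate for the number of zeros in the window. Write $Z(\tau)$ for either $Z_{{\rm ave},\mathcal{F},R}^{({\rm global})}(\tau)$ or $Z_{{\rm ave},\mathcal{F},R}^{({\rm local})}(\tau)$, always pairing it with the version of $D_{\mathcal{F}_R}$ built from the same family and the same renormalization (for the local statistic: $\mathcal{F}_R'$ and $\log N_t$ in place of $\mathcal{F}_R$ and $\log N$); since \eqref{eq:density} is assumed for both renormalizations, the argument below handles both at once. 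GRH---assumed in Definition \ref{defn:Zavelocglob}---makes each $\gamma_{f,j}$ real, so the window membership $\gamma_{f,j}\tfrac{\log N_f}{2\pi}\in[-\tau,\tau]$ and the counts of Definition \ref{defn:Zavelocglob} are meaningful, and we may assume $\phi(0)>0$ and $\psi(\tau)>0$ (otherwise the respective bounds are vacuous).

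Lower bound: fix $f$ in the family and split $\sum_j \phi(\gamma_{f,j}\tfrac{\log N_f}{2\pi})$ into the zeros with $\gamma_{f,j}\tfrac{\log N_f}{2\pi}$ inside $[-\tau,\tau]$ and those outside. Since $\phi\le 0$ outside, deleting the outside terms only increases the sum; since $\phi(x)\le\phi(0)$ everywhere, the remaining inside sum is at most $\phi(0)$ times the number of zeros in the window. Averaging over the family and inserting \eqref{eq:density},
\[
a\phi(0)+b\,\widehat{\phi}(0)+O\!\Bigl(\tfrac{\log\log R}{\log R}\Bigr)\ =\ D_{\mathcal{F}_R}(\phi)\ \le\ \phi(0)\,Z(\tau);
\]
dividing by $\phi(0)>0$ gives the claimed lower bound, the error becoming $O\bigl(\tfrac{\log\log R}{\phi(0)\log R}\bigr)$.

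Upper bound: fix $f$ again. Now $\psi\ge 0$ everywhere, so deleting the zeros outside $[-\tau,\tau]$ only decreases the sum; and since $\psi$ is even and monotonically decreasing on $(0,\tau)$ we have $\psi(x)\ge\psi(\tau)$ for all $|x|\le\tau$, so the retained sum is at least $\psi(\tau)$ times the number of zeros in the window. Averaging and using \eqref{eq:density},
\[
\psi(\tau)\,Z(\tau)\ \le\ D_{\mathcal{F}_R}(\psi)\ =\ a\psi(0)+b\,\widehat{\psi}(0)+O\!\Bigl(\tfrac{\log\log R}{\log R}\Bigr).
\]
Dividing by $\psi(\tau)>0$ and writing $a\psi(0)=a\psi(\tau)+a\bigl(\psi(0)-\psi(\tau)\bigr)$ recasts the main term as $a+\dfrac{a(\psi(0)-\psi(\tau))+b\,\widehat{\psi}(0)}{\psi(\tau)}$; as $\psi,\sigma,\tau$ are fixed while $R\to\infty$, the bounded factor $\psi(0)/\psi(\tau)$ is absorbed into the implied constant so the error can be displayed as $O\bigl(\tfrac{\log\log R}{\psi(0)\log R}\bigr)$.

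There is no genuine obstacle: this is a formal manipulation of the hypothesized density formula. The points that do need (minor) care are that the evenness of $\psi$ together with its monotonicity is used to control $\psi$ on the \emph{whole} window $[-\tau,\tau]$ rather than just $[0,\tau]$; that $\phi(0)$ and $\psi(\tau)$ are genuinely positive so the divisions are legitimate; and---implicitly---that the chosen $\phi,\psi$ are twice continuously differentiable with Fourier transform supported in $(-\sigma,\sigma)$, so that \eqref{eq:density} applies to them at all. The substantive work is elsewhere: constructing explicit test functions that simultaneously realise the required sign pattern, monotonicity, smoothness and support, and then optimising the resulting ratios (for instance the constant $C(h)$ in Theorem \ref{thm:mainbound}), which is carried out in the proof of Theorem \ref{thm:mainbound} rather than in this lemma.
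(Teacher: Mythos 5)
Your proof is correct and takes essentially the same route as the paper: discard the zeros outside $[-\tau,\tau]$ using the sign hypothesis on the test function, bound the function on the window by $\phi(0)$ from above (resp.\ by $\psi(\tau)$ from below), and divide. The paper writes out only the lower bound and declares the upper bound ``analogous''; your write-up simply makes that analogous step, and the harmless rewriting of the error term via the boundedness of $\psi(0)/\psi(\tau)$, explicit.
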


\begin{proof} We give the proof for the local rescaling; the global case follows analogously.
As $\phi(x)$ is non-positive for $|x| \ge \tau$, the contribution to the one-level density from the scaled zeros as large or larger than $\tau$ in absolute value is non-positive; thus if we remove these contributions then the one-level density gives the lower bound \be
\frac{1}{|\mathcal{F}_R|}\sum_{f\in \mathcal{F}_R} \sum_{|\gamma_{f}^{(j)}| \le \tau} \phi(\widetilde{\gamma}_{f}^{(j)})\ \ge\ a\phi(0) + b\widehat{\phi}(0) + O\left(\frac{\log \log R}{\log R}\right).\ee As $\phi$ is maximized at 0, we increase the left-hand side above by replacing $\phi(\widetilde{\gamma}_{f}^{(j)})$ with $\phi(0)$; doing so and dividing by $\phi(0)$ yields the claimed bound for $Z_{{\rm ave},\mathcal{F},R}^{({\rm local})}(\tau)$. The upper bound is proved analogously.
\end{proof}

\begin{rek}
These results are of course not of interest unless we are able to construct $\phi$ and $\psi$ satisfying the conditions in Theorem \ref{thm:boundsabovebelow}. For one-parameter families of elliptic curves of rank $r$ over $\Q(T)$, we have $a = r + \frac12$ and $b=1$. \end{rek}

\begin{rek} For test functions whose Fourier transform is supported in $(-1, 1)$, all known one-level densities of families of $L$-functions are in the form of Theorem \ref{thm:boundsabovebelow}, and thus our results are immediately applicable. For some families where the support exceeds $(-1, 1)$ (such as families of cuspidal newforms of square-free level split by sign of the functional equation), a little more work is needed as the functional form of the one-level density is different.\footnote{For the family of Dirichlet characters of prime conductor, the 1-level density is known to be $\widehat{\phi}(0)$ for support is known up to $(-2, 2)$, and thus is of the desired form.} For ease of exposition in this paper we confine ourselves to the $(-1, 1)$ case.
\end{rek}



\subsection{Proof of Theorem \ref{thm:mainbound}}

The main step in the proof of Theorem \ref{thm:mainbound} is showing that our result is non-vacuous by constructing $\phi$ and $\psi$ with the claimed properties. Our construction of $\phi$ is almost surely similar to the construction implicit in Mestre's work \cite{Mes}; see also Hughes and Rudnick \cite{HR}.

%

\begin{proof}[Proof of the Lower Bound in Theorem \ref{thm:mainbound}]
We give the lower bound for the number of zeros in $[-\tau, \tau]$ by constructing a good test function $\phi$. As our results depend on the support of $\hphi$ (which is finite), it is convenient to normalize our test function and express everything in terms of $h$, which we take to be an even, twice continuously differentiable function supported on $(-1, 1)$ and monotonically decreasing on $[0,1)$. For fixed $\sigma, \tau > 0$ let $f(y) = h(2y/\sigma)$, $g(y) = (f \ast f)(y)$ (the convolution\footnote{The convolution is defined by $(A \ast B)(x) = \int_{-\infty}^\infty A(t) B(x-t)dt$.} of $f$ with itself), and let $\phi(x)$ equal the Fourier transform of $g(y) + (2\pi\tau)^{-2} g''(y)$. We must show (i) ${\rm supp}(\widehat{\phi}) \subset (-\sigma, \sigma)$ and (ii) $\phi(x)$ is non-negative for $|x| < \tau$ and non-positive for $|x| > \tau$.

The proof of (i) follows from standard properties of convolution. Specifically, as ${\rm supp}(f) \subset (-\sigma/2, \sigma/2)$, we have ${\rm supp}(g) \subset (-\sigma, \sigma)$.\footnote{We may interpret the relation between $f$ and $g$ as follows. Let $X$ be a random variable with density $f$ supported in $(-\sigma/2, \sigma/2)$. Then $g = f\ast f$ is the density of $X+X$, and is supported in $(-\sigma, \sigma)$.} As the support of $g''$ is contained in the support of $g$ and $\hphi(y) = g(y) + (2\pi \tau)^{-2} g''(y)$, the support of $\hphi$ is contained in $(-\sigma, \sigma)$ as claimed.

For (ii), the Fourier transform of $g''(y)$ is $-(2\pi y)^2 \widehat{g}(y)$ (the Fourier transform converts differentiation to multiplication by $2\pi i x$ in our normalization). Further $g=f \ast f$ implies $g''=f*f''$. Combining the above, we find\footnote{As $\phi$ and $\hphi$ are even, the Fourier transform of the Fourier transform is the original function $\phi(x)$; if $\phi$ were not even, we would have to replace $\phi(x)$ with $\phi(-x)$.} the Fourier transform of $\hphi(y) = g(y) + (2\pi \tau)^{-2} g''(y)$ is $\phi(x) = \widehat{g}(x) \cdot \left(1 - (x/\tau)^2\right)$.

To complete the proof, we must show \be \frac{\widehat{\phi}(0)}{\phi(0)} \ = \
\frac{(\int_0^1 h(u)^2 du)+(\frac{1}{\sigma\tau\pi})^2(\int_0^1 h(u)h''(u) du)}{\sigma(\int_0^1 h(u)du)^2}.\ee
By construction we have
\be
\frac{\widehat{\phi}(0)}{\phi(0)}\ = \ \frac{g(0)+(2\pi\tau)^{-2} g''(0)}{\widehat{g}(0)}.
\ee
Since $g$ is even and monotonically decreasing near the origin (as $g$ has a maximum at 0), $g''(0)<0$. Thus larger values of $\tau$ should decrease the ratio above, at the cost of increasing the size of our window.

From our construction, as $h$ and $f$ are even we have
\be
g(0) \ = \ \int_{-\sigma/2}^{\sigma/2} f(t)^2 dt \ = \ 2\int_{0}^{\sigma/2} h\left(\frac{2t}{\sigma}\right)dt \ = \ \sigma\int_0^1 h(u)^2 du
\ee and
\bea
g''(0)&\ = \ &\int_{-\sigma/2}^{\sigma/2} f(t)f''(t) dt\nonumber\\
			&=&2\int_{0}^{\sigma/2} f(t)f''(t) dt\nonumber\\
			&=&\frac{8}{\sigma^2}\int_0^{\sigma/2} h\left(\frac{2t}{\sigma}\right)h''\left(\frac{2t}{\sigma}\right) dt \ \ \ \left({\rm since\ } f(t) = h\left(\frac{2t}{\sigma}\right),\ f''(t) = \frac{4}{\sigma^2} h\left(\frac{2t}{\sigma}\right)\right) \nonumber\\	&=&\frac{4}{\sigma}\int_0^1 h(u)h''(u) du.
\eea
As the Fourier transform of a convolution is the product of the Fourier transforms, a straightforward calculation yields
\bea
\widehat{g}(0)& \ = \ & \widehat{f}(0) \cdot \widehat{f}(0) \ = \ \sigma^2\left(\int_0^1 h(u) du\right)^2. \\ \nonumber
\eea
Collecting the above equalities, after some elementary algebra we can express the ratio ${\hat{\phi}(0)}/{\phi(0)}$ in terms of $h$ as
\be
\frac{\widehat{\phi}(0)}{\phi(0)}\ = \ \frac{(\int_0^1 h(u)^2 du)+(\frac{1}{\sigma\tau\pi})^2(\int_0^1 h(u)h''(u) du)}{\sigma(\int_0^1 h(u)du)^2}.
\ee If we set this ratio equal to zero (i.e., if we choose $\tau$ so that the numerator vanishes) then
we find\footnote{In obvious notation, we have $\int_0^1 h^2 \ge - (\pi \sigma \tau_{\rm critical})^{-2}\int_0^1 h h''$. We see $\int_0^1 h h'' \le 0$, and thus $\tau_{\rm critical} \ge (-\int_0^1 h^2 / \int_0^1 h h'')^{-1/2} (\pi\sigma)^{-1}$. } that on average there are at least $r+\frac12$ normalized zeros in the band $(-\frac1{\pi C(h)\sigma},\frac1{\pi C(h) \sigma})$, where
\be
C(h) \ = \ \left(-\frac{\int_0^1 h(u)^2du}{\int_0^1 h(u)h''(u)du}\right)^{1/2}.
\ee
\end{proof}

\begin{proof}[Proof of the Upper Bound in Theorem \ref{thm:mainbound}] The proof is similar to that of the lower bound; in particular, once we construct a function $\psi$ with the desired properties then the claim follows immediately from straightforward algebra.

We are thus again reduced to constructing a function with the specified properties. For convenience we construct a $\psi$ which is not Schwartz, but which is twice differentiable; a careful analysis of the proof of Theorem \ref{thm:millerthesis} shows that this suffices, and thus such a $\psi$ is sufficient for our purposes.

Consider the  function $\psi(x)= \left( \frac{\sin{x\pi \sigma}}{x\pi\sigma}\right)^{2}$ with a compactly supported Fourier transform given by
\be\twocase{\psi(y)\ =\ }{\frac{1}{\sigma}\left(1-\frac{|y|)}{\sigma}\right)}{if $y \in (-\sigma,\sigma)$}{0}{if $y \not\in (-\sigma,\sigma)$;} \ee see Figure \ref{fig:plotpsi} for a plot. Away from the origin, the derivative is given by \be \psi'(x) \ =\ \frac{2\sin(\sigma \pi x)}{\sigma \pi x^2}\left(\cos(\sigma \pi x) - \frac{\sin(\sigma \pi x)}{\sigma \pi x}\right).\ee
\begin{figure}
\begin{center}
\scalebox{1}{\includegraphics{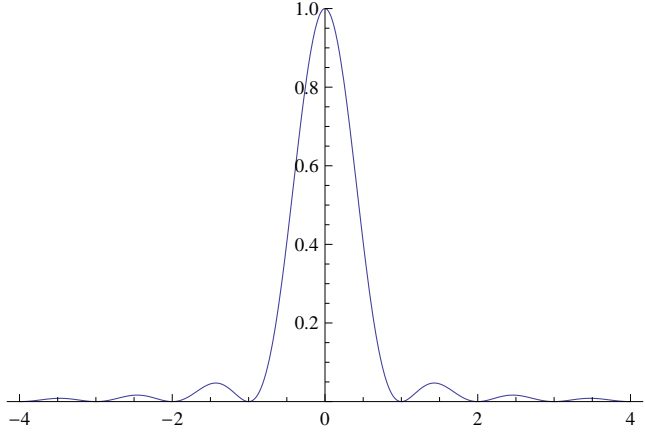}}
\caption{\label{fig:plotpsi} Plot of $\psi(x)$ $=$ $\left(\frac{\sin(x\pi \sigma)}{x\pi\sigma}\right)^2$ for $\sigma = 1$.}
\end{center}
\end{figure} It is easy to see that the global maximum is at $x=0$ and that $\psi(x)$ is decreasing up to $x = 1/\sigma$, proving the claim for any $\tau \le 1/\sigma$ (though the bound worsens as $\tau$ approaches $1/\sigma$ as $\psi(1/\sigma) = 0$).
\end{proof}

\begin{proof}[Proof of the Random Matrix Theory prediction in Theorem \ref{thm:mainbound}]
We assume the conjectures from Random Matrix Theory hold for any even test function, and not just Schwartz test functions. We therefore take $\phi(x)$ to be the characteristic function of the interval $[-\tau, \tau]$, which has Fourier transform equal to $\frac{\sin(2\pi \tau y)}{2\pi \tau y}\cdot 2\tau$. Using such a test function simply counts all normalized zeros in our family that are in $[-\tau, \tau]$ (there is no weighting as $\phi$ is identically 1 in this interval). Thus the predicted average number of such zeros in this interval as $R\to\infty$ is \bea \int_{-\infty}^\infty \widehat{\phi}(y) \widehat{W}_{r;{\rm SO}}(y)dy & \ = \ & \int_{-\infty}^\infty \hphi(y) \left(\delta(y) + \frac12 + r\right)dy \nonumber\\ &=& \left(r + \frac12\right) \phi(0) + \hphi(0) \nonumber\\ &=& r + \frac12 + 2\tau. \eea
\end{proof}

\subsection{Explicit upper and lower bounds}

We conclude by determining the upper and lower bounds from Theorem \ref{thm:mainbound} for the average number of normalized zeros in given intervals as $R\to\infty$.

We first consider the lower bound, which means we must maximize $C(h)$ (as it is in the denominator for $\tau$, the larger $C(h)$ the smaller the window). As the optimal choice of $h$ (in a given class of functions) is only slightly better than similar $h$, we do not spend too much time on determining the truly best $h$. Consider the family of functions given by
\be
h_n(x)\ = \ (1-x^2)(1+a_2x^2+\dots +a_{2i}x^{2i}+\dots+a_{2n}x^{2n}).
\ee
We set $a_0 = 1$ as the maximum is to occur at $x=0$, and since the ratio is invariant under rescaling the $a_i$'s, we might as well take $a_0=1$. Note that each $a_{2i+1} = 0$ as our function is even. We chose $h_n$ of this form as this forces $h_n$ to be even and to vanish at $\pm 1$. We have
\be
C(h_n) \ = \ \left(-\frac{\int_0^1 h_n(u)^2 du}{\int_0^1 h_n(u)h_n''(u) du}\right)^{1/2}.
\ee
The optimum value of the square-root appears to be $2/\pi$. For example, when $n=2$ we must compute \bea & & \max_{a_2,a_4}\left(-\frac{\frac8{15} + \frac{16 a_2}{105} + \frac{8 a_2^2}{315} + \frac{16 a_4}{315} + \frac{16 a_2 a_4}{693} + \frac{8 a_4^2}{1287} }{ -\frac43 - \frac{8 a_2}{15} - \frac{44 a_2^2}{105} - \frac{8 a_4}{35} - \frac{8 a_2 a_4}{15} - \frac{52 a_4^2}{231}}\right)^{1/2}\nonumber\\  & \ = \ & \max_{a_2,a_4} \left(\frac{6006 + 286 a_2^2 + 572 a_4 + 70 a_4^2 + 52 a_2 (33 + 5 a_4)}{39 (385 + 121 a_2^2 + 66 a_4 + 65 a_4^2 +   154 a_2 (1 + a_4))}\right)^{1/2}; \eea this is the quantity inside the square-root, not $C(h)$. Using Mathematica we find the optimal values are $a_2 \approx -.233428$ and $a_4 \approx .0189588$, which leads to $C(h) \approx 0.63662$; as $2/\pi \approx 0.63662$, this suggests the optimal value of $C(h)$ might be $2/\pi$. This yields the window $\left(-\frac{1}{2\sigma}, \frac{1}{2\sigma}\right)$ in which we have on average (as $R\to\infty$) $r+\foh$ zeros.

\begin{rek} As we expect the true answer to be a window of size $0$ (i.e., we expect to be able to take $\sigma = \infty$), it is not worthwhile to find the true optimum above merely to save a bit in a few decimal places. The purpose of this analysis is to show that we do see the correct number of zeros on average in the limit in a window of size proportional to $1/\sigma$; the actual value of the proportionality constant, while interesting, is in some sense immaterial as we believe the density conjecture holds for arbitrary $\sigma$.

We list some approximate values for $C(h)$ for other obvious candidates, which are all less than the 0.63662 (which is approximately $2/\pi$) found above.

\bi

\item $h(x) = (1-x^2)^2$ has $C(h) \approx 0.57735$ (with the quantity inside the square-root looking like $1/3$); if we take just $(1-x^2)$ we get $C(h) = \sqrt{2/5} \approx 0.632456$.

\item $h(x) = \exp(-1/(1-x^2))$ has $C(h) \approx 0.570024$.

\item $h(x) = \exp(-.754212/(1 - x^2))$ has $C(h) \approx 0.575629$ (the value of $.754212$ was obtained by searching for optimal test functions among $\exp(-a/(1-x^2))$).

    \ei

\end{rek}

We now turn to finding explicit upper bounds for the average number of normalized zeros in $[-\tau, \tau]$ as $R\to\infty$. We continue to analyze the candidate function $\psi(x) = \left(\frac{\sin(\pi \sigma x)}{\pi \sigma x}\right)^2$ (see Figure \ref{fig:plotpsi} for a plot). We have freedom in terms of how we rate our approximation; for example, we can decrease the upper bound if we simultaneously decrease the size of the interval.

A natural value to take for the size of our interval is the optimal interval found in the lower bound analysis, namely set $\tau \approx 1/2\sigma$. As \be\twocase{\widehat{\psi}(y) \ = \ }{\frac1{\sigma}\left(1 - \frac{|y|}{\sigma}\right)}{if $|y| \le \sigma$}{0}{otherwise,}\ee we have $\widehat{\psi}(0) = 1/\sigma$, $\psi(0) = 1$ and $\psi(1/2\sigma) = 4/\pi^2 \approx 0.810569$ (or if we believe the approximations, $\psi(\tau) = \frac{\pi^2}{4}\sin^2(2/\pi)$). Thus after some algebra we see that the average number of normalized zeros in the interval $(-\frac{1}{2\sigma}, \frac{1}{2\sigma})$ is at most $\frac{\pi^2}{4}\left(r + \frac12 + \frac1{\sigma}\right)$.

\appendix


\section{Standard Conjectures}\label{sec:standardconj}

At various points in the paper we assume the following conjectures.\\

\noindent \textbf{Generalized Riemann Hypothesis (for Elliptic Curves).}
\emph{Let $\Lambda(s,E)$ be the completed, normalized $L$-function of an elliptic
curve $E$ with function equation $s \to 1-s$. The non-trivial zeros $\rho$ of $\Lambda(s,E)$ have
$\mbox{Re}(\rho) = 1/2$.} \\


\noindent \textbf{Birch and Swinnerton-Dyer Conjecture \cite{BS-D1,BS-D2}.} \emph{Let $E$ be an elliptic curve of geometric rank
$r$ over $\Q$ with Mordell-Weil group  $E(\Q) = \Z^r \oplus \mathbb{T}$. Then
the analytic rank (the order of vanishing of the completed $L$-function at
the critical point) equals the geometric rank.} \\

\noindent \textbf{Tate's Conjecture for Elliptic Surfaces \cite{Ta}.}
\emph{Let $\mathcal{E}/ \Q$ be an elliptic surface and
$L_2(\mathcal{E},s)$ be the $L$-series attached to
$H^2_{\mbox{{\'e}t}}(\mathcal{E}/ \overline{\Q}, \Q_l)$.
$L_2(\mathcal{E},s)$ has a meromorphic continuation to $\C$ and
$-\mbox{ord}_{s=1} L_2(\mathcal{E},s)$ $= \mbox{rank}\
NS(\mathcal{E}/ \Q)$, where $NS(\mathcal{E}/ \Q)$ is the
$\Q$-rational part of the N{\'e}ron-Severi group of $\mathcal{E}$.
Further, $L_2(\mathcal{E},s)$ does not vanish on
the line $\mbox{Re}(s) = 1$.} \\

\begin{rek}\label{rek:commentstate}
Tate's conjecture is known for rational elliptic surfaces. An elliptic surface $\mathcal{E}: y^2 = x^3 + A(T)x + B(T)$ is rational if and only if one of the following is true: $(1)$ $0 < \max\{3\ {\rm deg} A, 2\ {\rm deg} B\} < 12;$ $(2)$ $3\ {\rm deg} A = 2\ {\rm deg} B = 12$ and ${\rm ord}_{T=0}T^{12} \Delta(T^{-1}) = 0$. See \cite{RSi},
pages $46-47$ for more details. \end{rek} \  \\

\noindent \textbf{ABC Conjecture.} \emph{Fix $\epsilon > 0$. For co-prime
positive integers $a$, $b$ and $c$ with $c = a+b$ and $N(a,b,c) =
\prod_{p|abc} p$, $c \ll_\epsilon N(a,b,c)^{1+\epsilon}$.} \\

The full strength of ABC is never needed; rather, we need a
consequence of ABC, the Square-Free Sieve Conjecture (see \cite{Gr}): \\

\noindent \textbf{Square-Free Sieve Conjecture.} \emph{Fix an irreducible
polynomial $f(t)$ of degree at least $4$. As $N \to \infty$, the
number of $t \in [N,2N]$ with $f(t)$ divisible by $p^2$ for some
$p > \log N$ is $o(N)$.} \\

For irreducible polynomials of degree at most $3$, the above is
known, complete with a better error than $o(N)$ (\cite{Ho},
chapter $4$).

We use the Square-Free Sieve to handle the variations in the
conductors. If our evaluation of the logarithm of the conductors is off
by as little as a small constant, the prime sums become
untractable. This is why many works normalize by the average
log-conductor. \\

The following conjecture is used only to interpret some of our results (unless we are calculating the 2-level density to distinguish the three orthogonal candidate groups).\\

\noindent \textbf{Restricted Sign Conjecture (for the Family $\mathcal{F}$).}
\emph{Consider a one-parameter family $\mathcal{F}$ of elliptic
curves. As $N \to \infty$, the signs of the curves $E_t$ are
equidistributed for $t \in [N,2N]$.} \\

The Restricted Sign conjecture can fail (there are
families with constant $j(E_t)$ where all curves have the same
sign, as well as more exotic examples). Helfgott \cite{He} has related the Restricted Sign
conjecture to the Square-Free Sieve conjecture and standard
conjectures on sums of Moebius: \\

\noindent \textbf{Polynomial Moebius.} \emph{Let $f(t)$ be a non-constant
polynomial such that no fixed square divides $f(t)$ for all $t$.
Then $\sum_{t=N}^{2N} \mu(f(t)) = o(N)$.} \\

The Polynomial Moebius conjecture is known for linear $f(t)$.

Helfgott shows the Square-Free Sieve and Polynomial Moebius imply
the Restricted Sign conjecture for many families; this is also discussed in \cite{Mil1}. More precisely,
let $M(t)$ be the product of the irreducible polynomials dividing
$\Delta(t)$ and not $c_4(t)$. \\

\noindent \textbf{Theorem: Equidistribution of Sign in a Family \cite{He}:}
\emph{Let $\mathcal{F}$ be a one-parameter family with $a_i(t) \in
\Z[t]$. If $j(E_t)$ and $M(t)$ are non-constant, then the signs of
$E_t$, $t \in [N,2N]$, are equidistributed as $N \to \infty$.
Further, if we restrict to good $t$, $t \in [N,2N]$ such that
$D(t)$ is good (usually square-free), the signs are still
equidistributed in the limit.}

\ \\

\end{document}